\theoremstyle{plain}
\newtheorem{Prop}{Proposition}
\newtheorem{Remark}{Remark}
\newtheorem{Theo}{Theorem}
\newtheorem{Lem}{Lemma}
\newtheorem{Cor}{Corollary}
\newcommand{\Si}{\operatorname{Si}}
\newcommand{\Ci}{\operatorname{Ci}}
\begin{document}

\title{Fractional moments}

\author{\'Oscar Ciaurri}
\address{Departamento de Matem\'aticas y Computaci\'on,
Universidad de La Rioja, Complejo Cient\'{\i}fico-Tecnol\'ogico,
Calle Madre de Dios 53, 26006 Logro\~no, Spain}
\email{oscar.ciaurri@unirioja.es}

\keywords{Moments, fractional part, polygamma, Bernoulli polynomials.}
\subjclass[2010]{Primary: 33B10, 33B15, 11B68.}
\thanks{The author was supported by grant  PGC2018-096504-B-C32 AEI, from Spanish Government.}

\begin{abstract}
We evaluate the moments of some functions composed with the fractional part of $1/x$. We name them fractional moments. In particular, we obtain expressions for the fractional moments of some trigonometric functions, the Bernoulli polynomials and the functions $x^m$ and $x^m(1-x)^m$.
\end{abstract}

\maketitle

\section{Introduction}
The main purpose of this paper is the evaluation of moments of some functions composed with the fractional part of $1/x$. In fact, we will analyze the integrals
\begin{equation*}
\label{eq:moments}
I_kf=\int_{0}^{1}x^k f\left(\bigg\{\frac{1}{x}\bigg\}\right)\, dx, \qquad k=0,1,2,\dots .
\end{equation*}
We name this integrals the fractional moments of the function $f$.

Up to our knowledge, the particular case $f(x)=x^m$ appears in the literature in different references (see \cite{Furdui-ITSF, Furdui-Libro, Furdui-Analysis, LSQ, SLQ, Valean}). For example, in \cite[Theorem 2.1]{Furdui-Analysis} or \cite[Problem 22.2]{Furdui-Libro} we can see the identity
\begin{equation}
\label{eq:Furdui-zetas}
\int_{0}^{1}x^k\left\{\frac{1}{x}\right\}^m\, dx=\frac{m!}{(k+1)!}\sum_{j=1}^{\infty}\frac{(k+j)!}{(m+j)!}(\zeta(k+j+1)-1), \qquad m,k\in \mathbb{Z}^+,
\end{equation}
where $\zeta$ denotes the zeta Riemann function. Moreover, for the particular case  $m=k+1$, by using the identity
\[
\sum_{j=1}^{\infty}\frac{\zeta(j+1)-1}{j+1}=1-\gamma,
\]
with $\gamma$ being the Euler-Mascheroni constant, it is proved that
\[
\int_{0}^{1}x^k\left\{\frac{1}{x}\right\}^{k+1}\, dx=H_{k+1}-\gamma-\sum_{j=2}^{k+1}\frac{\zeta(j)}{j},
\]
where $H_n$ is the $n$-th harmonic number. In this paper we will make a systematic analysis of this kind of integrals and we will obtain appropriated closed forms for them.

The main tool to evaluate fractional moments will be an identity relating them with an integral involving the function $\log\Gamma(x+1)$ and the derivatives of the function $f$ (see Lemma \ref{Lem:main} in the next section). From this result, we deduce the fractional moments for some functions. In particular, we analyze the fractional moments for some trigonometric functions, the Bernoulli polynomials and the functions $x^m$ and $x^m(1-x)^m$. The integrals containing $\log\Gamma(x+1)$ will be evaluated by using some results in \cite{Espinosa-Moll-1}. In that paper, we see that the integrals
\[
\int_{0}^{1}B_n(x)\log\Gamma(x)\, dx,
\]
where $B_n$ are the Bernoulli polynomials, are simpler to evaluate than the integrals
\[
\int_{0}^{1}x^n \log\Gamma(x)\, dx.
\]
In fact, the latter integrals are evaluated in terms of the former ones. By this reason, to analyze the fractional moments of $x^m(1-x)^m$ we start giving an expansion for these functions and their derivatives in terms of the Bernoulli polynomials (see Lemma \ref{Lem:expan-Berno} and the remark following it). We believe that this result has its  own interest.

The paper is organized as follows. In Section \ref{sec:main} we present the lemma allowing us to obtain the fractional moments and in the rest of the paper we show some examples and applications related to trigonometric functions, Bernoulli polynomials and the functions $x^m$ and $x^m(1-x)^m$.

All along the paper when an empty sum appears it must be taken as zero.

\section{The main lemma}
\label{sec:main}
The following lemma will be the main tool to evaluate fractional moments. Before, we define the sequence $\alpha_n=\zeta(n+1)$, for $n>0$, and $\alpha_0=\gamma$.
\begin{Lem}
\label{Lem:main}
  Let $f$ be a function having $k+2$ derivatives in the interval $[0,1]$. Then
\begin{multline*}
I_kf=\frac{1}{(k+1)!}\Bigg(\sum_{j=0}^{k}(k-j)!\left(f^{(j)}(0)\alpha_{k-j}-f^{(j)}(1)(\alpha_{k-j}-1))\right)\\
+\int_{0}^{1}f^{(k+2)}(x)\log\Gamma(x+1)\,dx\Bigg), \qquad k\ge 0.
\end{multline*}
\end{Lem}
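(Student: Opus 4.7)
The plan is to rewrite $I_k f$ as an integral against a high derivative of $\log\Gamma(x+1)$, and then integrate by parts enough times to move those derivatives onto $f$. Start with the change of variable $t=1/x$, which turns $I_k f$ into $\int_1^\infty t^{-(k+2)}f(\{t\})\,dt$. Splitting over the intervals $[n,n+1]$ with $n\geq 1$ and translating each to $[0,1]$ gives
\begin{equation*}
I_k f = \int_0^1 f(u)\sum_{n=1}^\infty\frac{1}{(u+n)^{k+2}}\,du.
\end{equation*}
The inner sum is a standard polygamma representation: it equals $\frac{(-1)^k}{(k+1)!}\psi^{(k+1)}(u+1)$. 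Since $\psi(u+1)=\frac{d}{du}\log\Gamma(u+1)$, this kernel is precisely $\frac{(-1)^k}{(k+1)!}\frac{d^{k+2}}{du^{k+2}}\log\Gamma(u+1)$.

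Next I would integrate by parts $k+2$ times. Setting $g(u)=\log\Gamma(u+1)$, the procedure produces the target bulk term $\frac{1}{(k+1)!}\int_0^1 f^{(k+2)}(u)g(u)\,du$ (the sign $(-1)^k(-1)^{k+2}=1$ works out) plus a boundary contribution
\begin{equation*}
\frac{1}{(k+1)!}\sum_{j=0}^{k+1}(-1)^{k+j}\bigl[f^{(j)}\,g^{(k+1-j)}\bigr]_0^1.
\end{equation*}
The $j=k+1$ summand vanishes because $\log\Gamma(1)=\log\Gamma(2)=0$, and for $j\leq k$ one has $g^{(k+1-j)}(u)=\psi^{(k-j)}(u+1)$.

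Finally, I would evaluate the polygamma values at $1$ and $2$. Using $\psi(1)=-\gamma$, $\psi(2)=1-\gamma$, the identity $\psi^{(m)}(1)=(-1)^{m+1}m!\,\zeta(m+1)$ for $m\geq 1$, and the recurrence $\psi^{(m)}(z+1)-\psi^{(m)}(z)=(-1)^m m!\,z^{-m-1}$, one gets the uniform formulas
\begin{equation*}
\psi^{(m)}(1)=(-1)^{m+1}m!\,\alpha_m,\qquad \psi^{(m)}(2)=(-1)^{m+1}m!(\alpha_m-1),\qquad m\geq 0,
\end{equation*}
using the convention $\alpha_0=\gamma$. Substituting, the sign $(-1)^{k+j}(-1)^{k-j+1}=-1$ collapses across all terms and the boundary contribution reduces to $\frac{1}{(k+1)!}\sum_{j=0}^k(k-j)!\bigl(f^{(j)}(0)\alpha_{k-j}-f^{(j)}(1)(\alpha_{k-j}-1)\bigr)$, matching the statement. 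The main obstacle is purely bookkeeping: tracking alternating signs across $k+2$ integrations by parts, confirming that the $j=k+1$ boundary term vanishes, and verifying that the $m=0$ case fits the general polygamma formula with the convention $\alpha_0=\gamma$; once these are in place, the identity falls out mechanically.
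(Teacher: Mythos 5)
Your proposal is correct and follows essentially the same route as the paper: the substitution $t=1/x$, the decomposition over $[n,n+1]$ leading to the polygamma kernel $\frac{(-1)^k}{(k+1)!}\psi^{(k+1)}(u+1)$, integration by parts $k+2$ times with the vanishing of the $\log\Gamma(1)=\log\Gamma(2)=0$ boundary term, and the evaluation of $\psi^{(m)}$ at $1$ and $2$ (including the $m=0$ case via the convention $\alpha_0=\gamma$). The sign bookkeeping you describe checks out against the stated formula.
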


The polygamma function will play a crucial role in the proof of the following lemma. It is defined as the $(m+1)$-th derivative of the logarithm of the gamma function
\[
\psi^{(m)}(x)=\frac{d^{m+1}}{dx^{m+1}}\log \Gamma(x), \qquad m=0,1,2,\dots .
\]
Two facts about the polygamma function will be fundamental. Its representation as a series
\begin{equation}
\label{eq:psi-series}
\psi^{(m)}(x)=(-1)^{m+1}m!\sum_{k=0}^{\infty}\frac{1}{(x+k)^{m+1}},
\end{equation}
and its values in the positive integers
\begin{align}
\psi^{(m)}(n)&=(-1)^{m+1}m!\left(\zeta(m+1)-\sum_{k=1}^{n-1}\frac{1}{k^{m+1}}\right)\notag
\\&=(-1)^{m+1}m!\sum_{k=n}^{\infty}\frac{1}{k^{m+1}},
\qquad m,n=1,2,\dots,
\label{eq:psi-integers-1}
\end{align}
and
\begin{equation}
\label{eq:psi-integers-2}
\phi^{(0)}(n)=-\gamma+\sum_{k=1}^{n-1}\frac{1}{k}, \qquad n=1,2,\dots.
\end{equation}
\begin{proof}[Proof of Lemma \ref{Lem:main}.]
With the change of variable $w=1/x$, we have
\begin{align*}
I_kf&=\int_{1}^{\infty} f(\{w\})\, \frac{dw}{w^{k+2}}=\sum_{j=1}^{\infty}\int_{j}^{j+1}f(w-j)\, \frac{dw}{w^{k+2}}\\&=\sum_{j=1}^{\infty}\int_{0}^{1}f(s)\,\frac{ds}{(j+s)^{k+2}}
=\frac{(-1)^k}{(k+1)!}\int_{0}^{1}f(s)\psi^{(k+1)}(s+1)\,ds,
\end{align*}
where in the last step we have used \eqref{eq:psi-series}. Now, applying integration by parts $k+2$ times and taking into account that $\log\Gamma(2)=\log\Gamma(1)=0$, we arrive at
\begin{multline*}
I_kf=\frac{(-1)^k}{(k+1)!}\Bigg(\sum_{j=0}^{k}(-1)^{j}(f^{(j)}(1)\psi^{(k-j)}(2)-f^{(j)}(0)\psi^{(k-j)}(1))\\
+(-1)^{k+2}\int_{0}^{1}f^{(k+2)}(s)\log \Gamma(s+1)\, ds\Bigg).
\end{multline*}
Finally, we conclude the proof applying \eqref{eq:psi-integers-1} and \eqref{eq:psi-integers-2}.
\end{proof}

\section{Fractional moments for trigonometric functions}
We start our examples given the fractional moments for the sine and cosine functions. The expression that we obtain for them involve the classical functions sine integral
\[
\Si(x)=\int_{0}^{x}\frac{\sin t}{t}\, dt
\]
and cosine integral
\[
\Ci(x)=-\int_{x}^{\infty}\frac{\cos t}{t}\, dt.
\]
The following lemma contains the evaluation of two integrals for $\log\Gamma(x+1)$ with trigonometric functions.
\begin{Lem}
\label{Lem:trig-log}
It is verified that
\[
\int_{0}^{1}\sin(2\pi x)\log\Gamma(x+1)\, dx=\frac{\Ci(2\pi)}{2\pi}
\]
and
\[
\int_{0}^{1}\cos(2\pi x)\log\Gamma(x+1)\, dx=\frac{1}{4}-\frac{\Si(2\pi)}{2\pi}.
\]
\end{Lem}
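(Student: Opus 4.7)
My plan is to integrate by parts once in each of the two integrals so that the factor $\log\Gamma(x+1)$ is replaced by the digamma function $\psi(x+1)$, then expand $\psi(x+1)$ as a series and recognise the remaining pieces as the tails of $\Si$ and $\Ci$ at $2\pi$.

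Starting with the sine integral, I take $u=\log\Gamma(x+1)$ and $dv=\sin(2\pi x)\,dx$. Because $\log\Gamma(1)=\log\Gamma(2)=0$, the boundary terms vanish, producing
\[
\int_{0}^{1}\sin(2\pi x)\log\Gamma(x+1)\,dx=\frac{1}{2\pi}\int_{0}^{1}\cos(2\pi x)\psi(x+1)\,dx,
\]
and the analogous identity (with a minus sign) for the cosine integral. Into these I substitute the standard series $\psi(x+1)=-\gamma+\sum_{k=1}^{\infty}\bigl(\frac{1}{k}-\frac{1}{k+x}\bigr)$. Since the constants $-\gamma$ and $1/k$ integrate to zero against both $\sin(2\pi x)$ and $\cos(2\pi x)$ on $[0,1]$, only the $1/(k+x)$ pieces survive, leaving
\[
\int_{0}^{1}\cos(2\pi x)\psi(x+1)\,dx=-\sum_{k=1}^{\infty}\int_{0}^{1}\frac{\cos(2\pi x)}{k+x}\,dx,
\]
and analogously for the sine version.

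Shifting the $k$-th summand by $u=x+k$ and splicing the intervals $[k,k+1]$ into $[1,\infty)$ collapses each series into a single integral, after which the change of variables $t=2\pi u$ converts it into the tails
\[
\int_{2\pi}^{\infty}\frac{\cos t}{t}\,dt=-\Ci(2\pi),\qquad \int_{2\pi}^{\infty}\frac{\sin t}{t}\,dt=\frac{\pi}{2}-\Si(2\pi).
\]
Reassembling everything with the $\pm\frac{1}{2\pi}$ prefactor delivers the two claimed identities.

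The only mildly delicate step is the interchange of sum and integral after substituting the series for $\psi(x+1)$, but this is immediate: $\bigl|\frac{1}{k}-\frac{1}{k+x}\bigr|=\frac{x}{k(k+x)}\le \frac{1}{k^{2}}$ on $[0,1]$, so the sum converges absolutely uniformly and Fubini applies. I do not anticipate any further obstacles; no deeper machinery (such as Kummer's Fourier series or the Espinosa--Moll integrals) seems necessary for this particular lemma.
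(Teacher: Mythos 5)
Your proof is correct, but it takes a genuinely different route from the paper. The paper quotes the Gradshteyn--Ryzhik values of $\int_0^1\sin(2\pi x)\log\Gamma(x)\,dx$ and $\int_0^1\cos(2\pi x)\log\Gamma(x)\,dx$ (which ultimately encode Kummer's Fourier expansion of $\log\Gamma$), writes $\log\Gamma(x+1)=\log\Gamma(x)+\log x$, and then evaluates $\int_0^1\sin(2\pi x)\log x\,dx$ and $\int_0^1\cos(2\pi x)\log x\,dx$ by integration by parts together with the representation $\Ci(x)=\gamma+\log x+\int_0^x\frac{\cos t-1}{t}\,dt$; the constants $\log 2\pi+\gamma$ cancel at the end. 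You instead integrate by parts once to trade $\log\Gamma(x+1)$ for $\psi(x+1)$, expand $\psi(x+1)$ as a series, and resum the translated pieces into the tail integrals $\int_{2\pi}^\infty\frac{\cos t}{t}\,dt=-\Ci(2\pi)$ and $\int_{2\pi}^\infty\frac{\sin t}{t}\,dt=\frac{\pi}{2}-\Si(2\pi)$. Your argument is self-contained (no table lookup, no Kummer series) and is very much in the spirit of the series-for-polygamma technique the paper already uses in the proof of its main lemma; the paper's route avoids handling the conditionally convergent tail integrals by leaning on known closed forms. Your uniform-convergence justification for the interchange is adequate, and the splicing of $\sum_k\int_k^{k+1}$ into $\int_1^\infty$ is legitimate since the partial sums are exactly the truncated improper integral, which converges by oscillation. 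Both proofs are sound; yours is arguably the more elementary and could even be folded into a single computation for $\mathcal{S}_0$ and $\mathcal{C}_0$ directly.
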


\begin{proof}
Taking $n=1$ in \cite[6.443.1 and 6.443.3]{Grad-Ri}, we have
\[
\int_{0}^{1}\sin(2\pi x)\log\Gamma(x)\, dx=\frac{\log 2\pi+\gamma}{2\pi}
\]
and
\[
\int_{0}^{1}\cos(2\pi x)\log\Gamma(x)\, dx=\frac{1}{4}.
\]
Then
\[
\int_{0}^{1}\sin(2\pi x)\log\Gamma(x+1)\, dx=\frac{\log 2\pi+\gamma}{2\pi}+\int_{0}^{1}\sin(2\pi x)\log x\, dx
\]
and
\[
\int_{0}^{1}\cos(2\pi x)\log\Gamma(x+1)\, dx=\frac{1}{4}+\int_{0}^{1}\cos(2\pi x)\log x\, dx.
\]
Now, applying integration by parts and the identity \cite[8.230.2]{Grad-Ri}
\[
\Ci(x)=\gamma+\log x+\int_{0}^{x}\frac{\cos t-1}{t}\, dt,
\]
we deduce
\begin{align*}
\int_{0}^{1}\sin(2\pi x)\log x\, dx&=\frac{1}{2\pi}\int_{0}^{1}\frac{\cos(2\pi x)-1}{x}\, dx
\\&=\frac{1}{2\pi}\int_{0}^{2\pi}\frac{\cos t-1}{t}\, dt=
-\frac{\log 2\pi+\gamma}{2\pi}+\frac{\Ci(2\pi)}{2\pi}
\end{align*}
and the result for the integral with the sine follows. The integral with the cosine can be obtained by using integration by parts only. Indeed,
\[
\int_{0}^{1}\cos(2\pi x)\log x\, dx=-\frac{1}{2\pi}\int_{0}^{1}\frac{\sin(2\pi x)}{x}\, dx
=-\frac{1}{2\pi}\int_{0}^{2\pi}\frac{\sin t}{t}\,dt=-\frac{\Si(2\pi)}{2\pi}.\qedhere
\]
\end{proof}

With the notation
\[
f_s(x)=\sin(2\pi x)\qquad\text{ and }\qquad f_c(x)=\cos(2\pi x),
\]
we have
\[
f_s^{(2j)}(x)=(-1)^j(2\pi)^{2j}\sin(2\pi x), \qquad f_s^{(2j+1)}(x)=(-1)^j(2\pi)^{2j+1}\cos(2\pi x),
\]
\[
f_c^{(2j)}(x)=(-1)^j(2\pi)^{2j}\cos(2\pi x),\quad \text{and} \quad f_c^{(2j+1)}(x)=(-1)^{j+1}(2\pi)^{2j+1}\sin(2\pi x).
\]
Moreover, $f_s^{(2j)}(0)=f_s^{(2j)}(1)=0$, $f_s^{(2j+1)}(0)=f_s^{(2j+1)}(1)=(-1)^{j}(2\pi)^{2j+1}$, $f_c^{(2j+1)}(0)=f_c^{(2j+1)}(1)=0$, and $f_c^{(2j)}(0)=f_c^{(2j)}(1)=(-1)^j(2\pi)^{2j}$.

Taking
\[
\mathcal{S}_k=\int_{0}^{1}x^k f_s\left(\left\{\frac{1}{x}\right\}\right)\, dx\qquad\text{ and }\qquad
\mathcal{C}_k=\int_{0}^{1}x^k f_c\left(\left\{\frac{1}{x}\right\}\right)\, dx
\]
we have the following result.

\begin{Theo}
For $n\ge 0$, it is verified that
\[
\mathcal{S}_{2n}=\frac{(-1)^{n+1}(2\pi)^{2n+1}}{(2n+1)!}
\left(\sum_{j=0}^{n-1}\frac{(-1)^j(2j+1)!}{(2\pi)^{2j+2}}+\Ci(2\pi)\right),
\]
\[
\mathcal{S}_{2n+1}=\frac{(-1)^n(2\pi)^{2n+2}}{(2n+2)!}
\left(\sum_{j=0}^{n}\frac{(-1)^j(2j)!}{(2\pi)^{2j+1}}-\frac{\pi}{2}+\Si(2\pi)\right),
\]
\[
\mathcal{C}_{2n}=\frac{(-1)^n(2\pi)^{2n+1}}{(2n+1)!}
\left(\sum_{j=0}^{n}\frac{(-1)^j(2j)!}{(2\pi)^{2j+1}}-\frac{\pi}{2}+\Si(2\pi)\right),
\]
and
\[
\mathcal{C}_{2n+1}=\frac{(-1)^n(2\pi)^{2n+2}}{(2n+2)!}
\left(\sum_{j=0}^{n}\frac{(-1)^j(2j+1)!}{(2\pi)^{2j+2}}+\Ci(2\pi)\right).
\]
\end{Theo}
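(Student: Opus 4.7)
The four identities come from applying Lemma~\ref{Lem:main} to the four cases $(f,k)\in\{(f_s,2n),(f_s,2n+1),(f_c,2n),(f_c,2n+1)\}$. The proof is streamlined by three observations that reduce it to bookkeeping.

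First, because $f_s^{(j)}(0)=f_s^{(j)}(1)$ and $f_c^{(j)}(0)=f_c^{(j)}(1)$ for every $j$, each summand in Lemma~\ref{Lem:main} collapses:
\[
f^{(j)}(0)\alpha_{k-j}-f^{(j)}(1)(\alpha_{k-j}-1)=f^{(j)}(1),
\]
so the constants $\alpha_i$ disappear entirely and the boundary contribution is simply $\sum_{j=0}^{k}(k-j)!\,f^{(j)}(1)$. Second, parity kills half of these terms: for $f_s$ only odd $j$ contribute, and for $f_c$ only even $j$. Inserting the derivative values listed just before the theorem statement, the boundary contribution becomes an explicit finite sum of the form $\sum_i (-1)^i(2\pi)^{2i+\varepsilon}(k-2i-\varepsilon)!$, with $\varepsilon=1$ for $f_s$ and $\varepsilon=0$ for $f_c$. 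Third, the residual integral $\int_0^1 f^{(k+2)}(x)\log\Gamma(x+1)\,dx$ is a scalar multiple of one of the two integrals evaluated in Lemma~\ref{Lem:trig-log}, the choice being dictated by the parity of $k$ and the chosen function.

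Assembling these pieces for $\mathcal{S}_{2n}$ yields
\[
(2n+1)!\,\mathcal{S}_{2n}=\sum_{i=0}^{n-1}(-1)^i(2\pi)^{2i+1}(2n-2i-1)!+(-1)^{n+1}(2\pi)^{2n+1}\Ci(2\pi),
\]
and the substitution $j=n-1-i$ together with factoring $(-1)^{n+1}(2\pi)^{2n+1}$ out of the sum produces the first stated formula. The remaining three cases are handled identically; the $-\pi/2$ appearing inside the parentheses for $\mathcal{S}_{2n+1}$ and $\mathcal{C}_{2n}$ arises because the cosine integral of Lemma~\ref{Lem:trig-log} contributes the constant $\tfrac14$, and one power of $2\pi$ gets absorbed into the overall prefactor, turning $\tfrac14\cdot 2\pi=\pi/2$. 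The main obstacle is therefore purely notational: keeping the alternating signs and factorial shifts consistent between the forward index $i$ of the boundary sum and the backward index $j$ in which the final answer is written.
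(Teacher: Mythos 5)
Your proposal is correct and follows exactly the route the paper intends (and leaves implicit): apply Lemma~\ref{Lem:main}, note that $f^{(j)}(0)=f^{(j)}(1)$ makes every $\alpha_{k-j}$ cancel, use parity to kill half the boundary terms, and evaluate the remaining integral via Lemma~\ref{Lem:trig-log}. The explicit intermediate identity you give for $(2n+1)!\,\mathcal{S}_{2n}$ and the reindexing $j=n-1-i$ check out, as does your accounting for the $-\pi/2$ term.
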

\begin{proof}
The identities can be deduced immediately by using Lemma~\ref{Lem:main}, Lemma~\ref{Lem:trig-log}, and the given properties about the derivatives of the functions $f_s$ and $f_c$.
\end{proof}

\section{Fractional moments for Bernoulli polynomials}
Now, we analyze the fractional moments for the Bernoulli polynomials $B_n(x)$. They can be defined through its exponential generating function
\[
\frac{te^{xt}}{e^t-1}=\sum_{n=0}^{\infty}B_n(x)\frac{t^n}{n!},
\]
converging for $|t|<\pi$. It is well known that $B_n(0)=(-1)^nB_n(1)=B_n$, where $B_n$ are the Bernoulli numbers. Remember that $B_{2k+1}=0$, for $k\ge 1$, and $B_1=-1/2$. A main tool in our approach will be the identity $B_n'(x)=nB_{n-1}(x)$ (Bernoulli polynomials are, in fact, a particular case of Appell polynomials). More generally, it is verified
\begin{equation}
\label{eq:ber-der}
B_n^{(k)}(x)=\frac{n!}{(n-k)!}B_{n-k}(x),\qquad n\ge k.
\end{equation}

A crucial point to obtain a proper expression for the fractional moments of the Bernoulli polynomials is the following identity (see \cite[(6.5) and (6.6)]{Espinosa-Moll-1})
\begin{equation}
\label{eq:ber-log}
\int_{0}^{1}B_n(x)\log\Gamma(x)\, dx=a_n,\qquad n\ge 0,
\end{equation}
where the sequence $a_n$ is defined by
\begin{equation}
\label{eq:seq-an}
a_{n}=\begin{cases}
-\zeta'(-n),& n=0,2,4,\dots,\\
\frac{B_{n+1}}{n+1}\left(\frac{\zeta'(n+1)}{\zeta(n+1)}-\log(2\pi)-\gamma\right), & n=1,3,5\dots,
\end{cases}
\end{equation}
with $\zeta'$ being the derivative of the Riemann function. It is convenient to remember that $\zeta'(0)=-\log\sqrt{2\pi}$ and
\[
\zeta'(2n)=(-1)^n\frac{(2n)!\zeta(2n+1)}{2(2\pi)^{2m}}.
\]
Moreover, we consider the sequence
\[
b_n=a_n-\frac{1}{n+1}\sum_{k=1}^{n+1}\binom{n+1}{k}\frac{B_{n+1-k}}{k}, \qquad n\ge 0.
\]

\begin{Lem}
\label{Lem:Berno-Log}
For $n\ge 0$, it is verified that
\[
\int_{0}^{1}B_n(x)\log\Gamma(x+1)\, dx=b_n.
\]
\end{Lem}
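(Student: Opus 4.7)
The plan is to reduce the statement to the known Espinosa--Moll formula \eqref{eq:ber-log} by peeling off a $\log x$ factor. Specifically, the functional equation $\Gamma(x+1)=x\Gamma(x)$ gives $\log\Gamma(x+1)=\log\Gamma(x)+\log x$ on $(0,1]$, so
\[
\int_{0}^{1}B_n(x)\log\Gamma(x+1)\,dx = a_n + \int_{0}^{1}B_n(x)\log x\,dx,
\]
and it suffices to show that the remaining integral equals $-\frac{1}{n+1}\sum_{k=1}^{n+1}\binom{n+1}{k}\frac{B_{n+1-k}}{k}$.

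To evaluate $\int_0^1 B_n(x)\log x\,dx$, I would integrate by parts using $B_n(x)=\frac{1}{n+1}\frac{d}{dx}B_{n+1}(x)$, taking the antiderivative to be $\frac{B_{n+1}(x)-B_{n+1}(0)}{n+1}$ (rather than $\frac{B_{n+1}(x)}{n+1}$) so that the boundary term at $0$ is controlled. Since $B_{n+1}(x)-B_{n+1}$ vanishes at $x=0$, the product $(B_{n+1}(x)-B_{n+1})\log x$ is $O(x\log x)$ as $x\to 0^+$ and vanishes at $x=1$ as well, leaving
\[
\int_{0}^{1}B_n(x)\log x\,dx=-\frac{1}{n+1}\int_{0}^{1}\frac{B_{n+1}(x)-B_{n+1}}{x}\,dx.
\]

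Finally, I would plug in the binomial expansion $B_{n+1}(x)=\sum_{k=0}^{n+1}\binom{n+1}{k}B_{n+1-k}x^{k}$; subtracting the constant term kills the $x^{0}$ contribution, so that dividing by $x$ and integrating termwise over $[0,1]$ produces $\sum_{k=1}^{n+1}\binom{n+1}{k}\frac{B_{n+1-k}}{k}$. Combining with $a_n$ gives exactly $b_n$.

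The only subtle point is the boundary behaviour at $x=0$, where both $\log\Gamma(x)$ and $\log x$ diverge and the naive antiderivative gives a divergent boundary term; the shift by $B_{n+1}(0)$ in the integration-by-parts is the trick that sidesteps this. Everything else is a routine manipulation of the binomial expansion of $B_{n+1}(x)$.
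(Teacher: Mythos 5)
Your proposal is correct and follows essentially the same route as the paper: split off $\log x$ via $\Gamma(x+1)=x\Gamma(x)$, invoke \eqref{eq:ber-log}, and integrate by parts against the shifted antiderivative $\frac{B_{n+1}(x)-B_{n+1}}{n+1}$ before expanding with \eqref{eq:ber.ber}. Your extra remark on the $O(x\log x)$ boundary behaviour at the origin is a point the paper leaves implicit, but the argument is the same.
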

\begin{proof}
From \eqref{eq:ber-log}, we have
\[
\int_{0}^{1}B_n(x)\log\Gamma(x+1)\, dx=\int_{0}^{1}B_n(x)\log x\, dx+a_n.
\]
Now, applying integration by parts to the first integral and using the identity
\begin{equation}
\label{eq:ber.ber}
B_m(x)=\sum_{k=0}^{m}\binom{m}{k}B_{m-k}x^k
\end{equation}
(which can be deduced from \eqref{eq:ber-der} by using that $B_n(0)=B_n$), we obtain that
\begin{align*}
\int_{0}^{1}B_n(x)\log x\, dx&=-\frac{1}{n+1}\int_{0}^{1}\frac{B_{n+1}(x)-B_{n+1}}{x}\, dx\\&=-\frac{1}{n+1}\sum_{k=1}^{n+1}\binom{n+1}{k}B_{n+1-k}\int_{0}^{1}x^{k-1}\, dx
\\&=-\frac{1}{n+1}\sum_{k=1}^{n+1}\binom{n+1}{k}\frac{B_{n+1-k}}{k}
\end{align*}
and the proof is completed.
\end{proof}

Taking
\[
J_k^n=\int_{0}^{1}x^kB_n\left(\left\{\frac{1}{x}\right\}\right)\, dx,
\]
we have the following result.

\begin{Theo}
For $n\ge 1$, it is verified that
\[
J_k^n=\frac{1}{(k+1)\binom{k}{n}}\Bigg(\sum_{j=0}^{[n/2]}\binom{k-n+2j}{2j}B_{2j}
+(k-n+1)\bigg(\frac{1}{2}-\zeta(k-n+2)\bigg)\Bigg),
\]
for $k\ge n$,
\[
J_{n-1}^n=\sum_{j=1}^{[n/2]}\frac{B_{2j}}{2j}+\frac{1}{2}-\gamma,
\]
and,
\begin{equation*}
J_{k}^n=\frac{1}{k+1}\binom{n}{k}\Bigg(\sum_{j=[(n-k+1)/2]}^{[n/2]}
\frac{B_{2j}}{\binom{2j}{k-n+2j}}+(n-k)(n-k-1)b_{n-k-2}\Bigg),
\end{equation*}
for $0\le k \le n-2$.
\end{Theo}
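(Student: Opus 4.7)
The plan is to apply Lemma~\ref{Lem:main} to $f = B_n$ and exploit the Appell identity~\eqref{eq:ber-der}, which gives $B_n^{(j)}(0) = \tfrac{n!}{(n-j)!} B_{n-j}$ and $B_n^{(j)}(1) = (-1)^{n-j}\tfrac{n!}{(n-j)!} B_{n-j}$ for $j \le n$, with $B_n^{(j)} \equiv 0$ for $j > n$. Each boundary summand in Lemma~\ref{Lem:main} thus reduces to
\[
(k-j)!\,\frac{n!}{(n-j)!}\,B_{n-j}\,\bigl[\alpha_{k-j} - (-1)^{n-j}(\alpha_{k-j}-1)\bigr],
\]
which equals $(k-j)!\tfrac{n!}{(n-j)!}B_{n-j}$ when $n-j$ is even and $(k-j)!\tfrac{n!}{(n-j)!}B_{n-j}(2\alpha_{k-j}-1)$ when $n-j$ is odd. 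Because $B_m = 0$ for odd $m \ge 3$, only even values of $n-j$ and the isolated value $n-j = 1$ (occurring at $j = n-1$, where $B_1 = -1/2$) contribute.

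The three stated cases match how many of the critical indices $j \in \{n-1, n\}$ actually lie in the summation range $[0,k]$ and whether $B_n^{(k+2)}$ vanishes. For $k \ge n$ the integral term drops ($k+2 > n$), and both critical indices survive: reindexing $n - j = 2i$ yields the even part $n!\sum_{i=0}^{[n/2]}\tfrac{(k-n+2i)!}{(2i)!}B_{2i}$, while the $j = n-1$ term contributes $n!(k-n+1)!\bigl(\tfrac12 - \zeta(k-n+2)\bigr)$ after using $\alpha_{k-n+1} = \zeta(k-n+2)$ (valid because $k-n+1 \ge 1$). Dividing by $(k+1)!$ and rewriting $\tfrac{(k-n+2i)!}{(2i)!(k-n)!} = \binom{k-n+2i}{2i}$ together with $\tfrac{n!(k-n)!}{(k+1)!} = \tfrac{1}{(k+1)\binom{k}{n}}$ recovers the first formula.

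The case $k = n-1$ differs only in that $j = n$ is now out of range (so the $i=0$, $B_0 = 1$ term is dropped) and $\alpha_0 = \gamma$ replaces $\zeta$; the residual even sum collapses to $\sum_{i=1}^{[n/2]}B_{2i}/(2i)$ via $(2i-1)!/(2i)! = 1/(2i)$, and the $j = n-1$ piece supplies $\tfrac12 - \gamma$ after dividing by $n! = (k+1)!$. The case $0 \le k \le n-2$ is the opposite extreme: neither $n-1$ nor $n$ lies in $[0,k]$, so no odd-index Bernoulli contributes to the boundary sum, and only $n - j = 2i$ with $2i \ge n - k$ remains, producing the range $[(n-k+1)/2] \le i \le [n/2]$. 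The integral now survives, and since $B_n^{(k+2)}(x) = \tfrac{n!}{(n-k-2)!} B_{n-k-2}(x)$, Lemma~\ref{Lem:Berno-Log} evaluates it to $\tfrac{n!}{(n-k-2)!} b_{n-k-2}$; the identity $(n-k)(n-k-1) = (n-k)!/(n-k-2)!$ then absorbs the extra factorial against the prefactor $\tfrac{1}{k+1}\binom{n}{k} = \tfrac{n!}{(k+1)!(n-k)!}$ to yield the third formula.

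The main obstacle I anticipate is purely bookkeeping: keeping the three regimes for $k$ cleanly separated, tracking which boundary indices survive in each, and converting the resulting factorial quotients into the binomial coefficients $\binom{k}{n}$, $\binom{n}{k}$, $\binom{k-n+2j}{2j}$, and $\binom{2j}{k-n+2j}$ in which the theorem is stated.
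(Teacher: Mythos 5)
Your proposal is correct and follows exactly the route the paper intends (the paper's own proof is a one-line appeal to Lemma~\ref{Lem:main}, Lemma~\ref{Lem:Berno-Log}, and the identities $B_n^{(j)}=\frac{n!}{(n-j)!}B_{n-j}$, $B_m(1)=(-1)^mB_m$, $B_{2i+1}=0$ for $i\ge 1$). Your case analysis of which boundary indices survive, the reindexing $n-j=2i$, and the factorial-to-binomial conversions all check out against the stated formulas.
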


\begin{proof}
The result follows by applying Lemma \ref{Lem:main}, Lemma \ref{Lem:Berno-Log}, and the properties of Bernoulli polynomials and Bernoulli numbers.
\end{proof}

\section{Fractional moments for the functions $x^m$}
Considering the functions $h_m(x)=x^m$, with $m\ge 1$, in this section we evaluate their fractional moments. More precisely, we calculate the integrals
\[
\mathcal{C}_k^m=\int_{0}^{1}x^k h_m\left(\left\{\frac{1}{x}\right\}\right)\, dx.
\]
To this end, we need the two following lemmas.

\begin{Lem}
\label{lem:pot-log}
For $n\ge 0$, it is verified that
\[
\int_{0}^{1}x^n\log\Gamma(x+1)\, dx=-\frac{1}{(n+1)^2}+\frac{1}{n+1}\sum_{k=0}^{n}\binom{n+1}{k}a_k,
\]
where the sequence $a_k$ was defined in \eqref{eq:seq-an}.
\end{Lem}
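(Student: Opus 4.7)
The plan is to reduce the integral against $\log\Gamma(x+1)$ to an integral against $\log\Gamma(x)$, so that the main identity \eqref{eq:ber-log} can be applied. Starting from the obvious decomposition
\[
\log\Gamma(x+1)=\log x+\log\Gamma(x),
\]
I would split
\[
\int_{0}^{1}x^{n}\log\Gamma(x+1)\,dx=\int_{0}^{1}x^{n}\log x\,dx+\int_{0}^{1}x^{n}\log\Gamma(x)\,dx.
\]
The first summand is elementary: integration by parts (or differentiation under the integral sign in $\int_{0}^{1}x^{s}\,dx=1/(s+1)$) immediately gives $\int_{0}^{1}x^{n}\log x\,dx=-1/(n+1)^{2}$, which produces the leading term in the statement.

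For the second summand the idea is to expand $x^{n}$ as a linear combination of Bernoulli polynomials. The key identity is
\[
x^{n}=\frac{1}{n+1}\sum_{k=0}^{n}\binom{n+1}{k}B_{k}(x),
\]
which follows by combining the difference equation $B_{n+1}(x+1)-B_{n+1}(x)=(n+1)x^{n}$ with the addition formula $B_{n+1}(x+1)=\sum_{k=0}^{n+1}\binom{n+1}{k}B_{k}(x)$ (a direct consequence of the generating function for $B_{n}(x)$). Substituting this expansion into $\int_{0}^{1}x^{n}\log\Gamma(x)\,dx$, exchanging sum and integral, and applying \eqref{eq:ber-log} gives
\[
\int_{0}^{1}x^{n}\log\Gamma(x)\,dx=\frac{1}{n+1}\sum_{k=0}^{n}\binom{n+1}{k}a_{k}.
\]

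Adding the two pieces yields exactly the claimed formula. The calculation is essentially routine, so there is no real obstacle; the only point worth flagging is the derivation (or citation) of the Bernoulli expansion of $x^{n}$, which is the mechanism that funnels the problem into the form where \eqref{eq:ber-log} directly applies.
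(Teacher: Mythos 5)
Your proposal is correct and follows essentially the same route as the paper: split $\log\Gamma(x+1)=\log x+\log\Gamma(x)$, compute $\int_0^1 x^n\log x\,dx=-1/(n+1)^2$, expand $x^n$ in Bernoulli polynomials via \eqref{eq:pot-Ber}, and apply \eqref{eq:ber-log}. The only (immaterial) difference is how the expansion of $x^n$ is justified -- you use the difference equation for $B_{n+1}$, while the paper derives it from \eqref{eq:ber.ber} and a Bernoulli-number sum identity.
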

\begin{proof}
By using the identity\footnote{This fact follows from \eqref{eq:ber.ber} applying the identity
\[
\sum_{k=0}^{m}\binom{m+1}{k}=\begin{cases}1,& m=0\\0, & m\not=0\end{cases},
\]
which can be proved by using the exponential generating function
\[
\frac{t}{e^{t}-1}=\sum_{k=0}^{\infty}\frac{B_k}{k!}t^k.
\]}
\begin{equation}
\label{eq:pot-Ber}
x^n=\frac{1}{n+1}\sum_{j=0}^{n}\binom{n+1}{j}B_j(x),
\end{equation}
it is clear that
\begin{align*}
\int_{0}^{1}x^n\log\Gamma(x+1)\, dx&=\int_{0}^{1}x^n\log x\, dx+\int_{0}^{1}x^n\log\Gamma(x)\, dx\\
&=-\frac{1}{(n+1)^2}+\frac{1}{n+1}\sum_{j=0}^{n}\binom{n+1}{j}\int_{0}^{1}B_j(x)\log\Gamma(x)\, dx
\end{align*}
and the results is obtained applying \eqref{eq:ber-log}.
\end{proof}
\begin{Lem}
\label{lem:pot-id}
The identities
\[
\sum_{j=0}^{m}\frac{(k-j)!}{(m-j)!}=\frac{(k+1)!}{m!(k+1-m)},\qquad k\ge m
\]
and
\[
\sum_{j=0}^{k}\frac{(k-j)!}{(m-j)!}=\frac{(k+1)!}{m!(k+1-m)}\left(1-\binom{m}{k+1}\right), \qquad 0\le k\le m-2,
\]
hold.
\end{Lem}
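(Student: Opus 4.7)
Both identities are purely combinatorial, and I would attack them by reindexing each sum so the summand becomes a quotient of consecutive factorials, then finishing by either a classical binomial identity or a telescoping argument.

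For the first identity (case $k\ge m$), I substitute $i=m-j$ to rewrite the sum as $\sum_{i=0}^{m}\frac{(k-m+i)!}{i!}$. Dividing through by $(k-m)!$ turns each summand into $\binom{k-m+i}{i}$, so the identity reduces to the classical hockey-stick identity
\[
\sum_{i=0}^{m}\binom{k-m+i}{i}=\binom{k+1}{m}.
\]
Multiplying back by $(k-m)!$ and using $(k-m+1)!=(k-m+1)(k-m)!$ yields $\frac{(k+1)!}{m!(k+1-m)}$, as desired.

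For the second identity (case $0\le k\le m-2$), the analogous substitution $n=k-j$ produces $\sum_{n=0}^{k}\frac{n!}{(m-k+n)!}$. Since $m-k-1\ge 1$, the auxiliary sequence $c_n := \frac{n!}{(m-k-1)\,(m-k-1+n)!}$ is well-defined, and a short computation shows $c_n-c_{n+1}=\frac{n!}{(m-k+n)!}$. The sum therefore telescopes to $c_0-c_{k+1}=\frac{1}{(m-k-1)(m-k-1)!}-\frac{(k+1)!}{m!(m-k-1)}$. Expanding $\binom{m}{k+1}=\frac{m!}{(k+1)!(m-k-1)!}$ and using $\frac{1}{k+1-m}=-\frac{1}{m-k-1}$ in the claimed right-hand side shows that it also reduces to this same expression.

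The step most likely to demand care is this final matching in the second identity: the factor $1-\binom{m}{k+1}$ must be expanded so that the two endpoint values coming from the telescoping line up with the two terms produced by the binomial coefficient, and the signs are delicate because $k+1-m$ is negative. No deeper analytic tool is needed; for variety, one could replace the telescoping step by the Beta-function representation $\frac{(k-j)!(m-k-1)!}{(m-j)!}=\int_0^1 t^{k-j}(1-t)^{m-k-1}\,dt$ and then sum the geometric series $\sum_{j=0}^{k}t^{k-j}$ before integrating.
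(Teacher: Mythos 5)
Your proof is correct. For the first identity you do exactly what the paper does: after reindexing, divide by $(k-m)!$ to reduce the sum to the parallel-summation (hockey-stick) identity $\sum_{i=0}^{m}\binom{k-m+i}{i}=\binom{k+1}{m}$. For the second identity, however, your route is genuinely different and, I would say, cleaner. The paper rewrites the claim in the equivalent form $\frac{m-k-1}{k+1}\sum_{j=0}^{k}\binom{m}{j}/\binom{k}{j}=\binom{m}{k+1}-1$ and then verifies that both sides satisfy the recurrence $(m-k)a_{m+1,k}-(m+1)a_{m,k}=k+1$ together with the base case $m=2$, $k=0$ --- a proof by verification that requires one to already know the answer. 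You instead exhibit an explicit antidifference: with $c_n=\frac{n!}{(m-k-1)\,(m-k-1+n)!}$ one checks directly that $c_n-c_{n+1}=\frac{n!}{(m-k+n)!}$, so the sum telescopes to $c_0-c_{k+1}$, and a short computation (using $k+1-m=-(m-k-1)$ and $\binom{m}{k+1}=\frac{m!}{(k+1)!(m-k-1)!}$) identifies this with the stated right-hand side; I verified both the antidifference identity and the final matching, and the hypothesis $k\le m-2$ guarantees $m-k-1\ge 1$ so $c_n$ is well defined. The telescoping argument is constructive and self-contained, whereas the paper's recurrence check is shorter to state but opaque; your Beta-function alternative would also work and would unify the two cases, at the cost of introducing an integral where none is needed.
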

\begin{proof}
The first identity is equivalent to
\[
\sum_{j=0}^{m}\binom{k-m+j}{j}=\binom{k+1}{m}
\]
and this is a consequence of the relation
\[
\sum_{j=0}^{m}\binom{n+j}{j}=\binom{n+m+1}{m},\qquad n\ge 0,
\]
which can be proved in an elementary way by using induction on $m$.

The second identity is equivalent to
\[
\frac{m-k-1}{k+1}\sum_{j=0}^{k}\frac{\binom{m}{j}}{\binom{k}{j}}=\binom{m}{k+1}-1.
\]
In this case, it is enough to check that both sides of the identity satisfy the recurrence relation
\[
(m-k)a_{m+1,k}-(m+1)a_{m,k}=k+1, \qquad 0\le k \le m-2,
\]
and they coincide for $m=2$ and $k=0$.
\end{proof}

In the proof of our result for $\mathcal{C}_k^m$, we will apply that $h_m^{(k)}(x)=m!x^{m-k}/(m-k)!$, for $0\le k \le m$, and $h_m^{(k)}(x)=0$, for $k>m$. Moreover, $h_m^{(k)}(0)=0$, for $k\not= m$, $h_m^{(m)}(0)=m!$, and $h_m^{(k)}(1)=m!/(m-k)!$, for $0\le k \le m$.

Now, we have the following result.
\begin{Theo}
\label{moment-pot}
For $m\ge 1$, it is verified that
\[
\mathcal{C}_k^m=\frac{1}{k+1-m}-\frac{1}{(k+1)\binom{k}{m}}\sum_{j=k-m+1}^{k}\binom{j}{k-m}\zeta(j+1), \qquad k\ge m,
\]
\[
\mathcal{C}_{m-1}^m=H_m-\gamma-\sum_{j=1}^{m-1}\frac{\zeta(j+1)}{j+1},
\]
and
\begin{multline*}
\mathcal{C}_k^m=\frac{1}{k+1-m}-\frac{1}{k+1}\binom{m}{k}\left(\sum_{j=1}^{k}\frac{\zeta(j+1)}{\binom{m-k+p}{p}}
+\gamma\right)\\+\binom{m}{k+1}\sum_{j=0}^{m-k-2}\binom{m-k-1}{j}a_j, \qquad 0\le k\le m-2.
\end{multline*}
\end{Theo}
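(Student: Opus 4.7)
The plan is to invoke Lemma \ref{Lem:main} with $f=h_m$, substituting the derivative data listed just before the statement: $h_m^{(j)}(0)=0$ for $j\ne m$ while $h_m^{(m)}(0)=m!$, $h_m^{(j)}(1)=m!/(m-j)!$ for $0\le j\le m$, and $h_m^{(j)}\equiv 0$ for $j>m$. Three cases arise naturally according to the position of $k$ relative to $m$, corresponding to whether the $j=m$ boundary term at $0$ enters the sum and whether the $\log\Gamma$ integral is active.

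In the range $k\ge m$, the $(k+2)$-nd derivative of $h_m$ vanishes, so the integral in Lemma \ref{Lem:main} drops out. The $j=m$ contribution at $0$ cancels the $\alpha_{k-m}$ part of the $j=m$ contribution at $1$, leaving
\[
\mathcal{C}_k^m=-\frac{m!}{(k+1)!}\sum_{j=0}^{m-1}\frac{(k-j)!}{(m-j)!}\alpha_{k-j}+\frac{m!}{(k+1)!}\sum_{j=0}^{m}\frac{(k-j)!}{(m-j)!}.
\]
The first identity of Lemma \ref{lem:pot-id} collapses the second sum to $1/(k+1-m)$, while the reindexing $j\mapsto k-m+i$ in the first sum, together with $\alpha_{k-j}=\zeta(k-j+1)$ (valid since $k-j\ge 1$), matches the binomial form in the theorem.

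For $k=m-1$ the integral again vanishes and there is no $f^{(j)}(0)$ contribution, so the expression reduces to $-\sum_{i=0}^{m-1}(\alpha_i-1)/(i+1)$. Separating the $i=0$ piece ($\alpha_0=\gamma$) from the rest ($\alpha_i=\zeta(i+1)$) and recognizing $\sum_{i=0}^{m-1}1/(i+1)=H_m$ gives the stated identity. For $0\le k\le m-2$ the integral is active and is evaluated via Lemma \ref{lem:pot-log} with $n=m-k-2$. The boundary sum is reshaped by the second identity of Lemma \ref{lem:pot-id} so that its constant piece produces $(1-\binom{m}{k+1})/(k+1-m)$; the term $-1/(m-k-1)^2$ from Lemma \ref{lem:pot-log}, scaled by $m!/\bigl((k+1)!(m-k-2)!\bigr)=(m-k-1)\binom{m}{k+1}$, contributes $\binom{m}{k+1}/(k+1-m)$. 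These two combine to the advertised $1/(k+1-m)$; the zeta part of the boundary sum factors through $\frac{1}{k+1}\binom{m}{k}$; and the $a_j$ terms inherit the scaling $\binom{m}{k+1}$, matching the theorem.

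The main obstacle is the bookkeeping in this last case: the single clean term $1/(k+1-m)$ only emerges after correctly combining the constant part of the boundary sum with the $-1/(m-k-1)^2$ coming from Lemma \ref{lem:pot-log}, and one must keep $\binom{m}{k}$ and $\binom{m}{k+1}$ separated while reindexing the zeta sums so that $\binom{m-k+j}{j}$ appears in the denominator. Everything else reduces to routine factorial manipulations.
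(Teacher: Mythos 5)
Your proposal is correct and follows exactly the route the paper intends: the paper's proof is a one-line appeal to Lemma \ref{Lem:main}, Lemma \ref{lem:pot-log}, Lemma \ref{lem:pot-id}, and the derivative data for $h_m$, and you have carried out precisely that computation, including the key cancellation of the constant boundary sum against the $-1/(m-k-1)^2$ term to produce $1/(k+1-m)$ in the case $0\le k\le m-2$. Your reindexing also confirms that the denominator in the theorem's last display should read $\binom{m-k+j}{j}$ rather than $\binom{m-k+p}{p}$.
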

\begin{proof}
The result can de deduced by applying Lemma \ref{Lem:main}, Lemma \ref{lem:pot-log}, Lemma \ref{lem:pot-id}, and the given elementary properties for the derivatives of $h_m$.
\end{proof}
\begin{Remark}
  We have to observe that a closed form for $\mathcal{C}_k^m$, when $k\ge m$, appears in \cite[Problem 1.47]{Valean} and it is equivalent to the given one in our previous theorem.
\end{Remark}

Note that, taking $k=m$, we obtain that
\[
\mathcal{C}_m^m=1-\frac{1}{m+1}\sum_{j=1}^{m}\zeta(j+1),
\]
which is the result in \cite[Corollary 2.2]{Furdui-Analysis}. Moreover, the identity
\eqref{eq:Furdui-zetas} and the previous theorem imply the following corollary.
\begin{Cor}
  For $m\ge 1$, it is verified that
\begin{multline*}
\frac{m!}{(k+1)!}\sum_{j=1}^{\infty}\frac{(k+j)!}{(m+j)!}(\zeta(k+j+1)-1)=\frac{1}{k+1-m}
\\-\frac{1}{(k+1)\binom{k}{m}}\sum_{j=k-m+1}^{k}\binom{j}{k-m}\zeta(j+1),\qquad k\ge m,
\end{multline*}
\[
\sum_{j=1}^{\infty}\frac{\zeta(m+j)-1}{m+j}=H_m-\gamma-\sum_{j=1}^{m-1}\frac{\zeta(j+1)}{j+1},
\]
and
\begin{multline*}
\frac{m!}{(k+1)!}\sum_{j=1}^{\infty}\frac{(k+j)!}{(m+j)!}(\zeta(k+j+1)-1)=\frac{1}{k+1-m}-\frac{1}{k+1}\binom{m}{k}
\\\times \left(\sum_{j=1}^{k}\frac{\zeta(j+1)}{\binom{m-k+p}{p}}
+\gamma\right)+\binom{m}{k+1}\sum_{j=0}^{m-k-2}\binom{m-k-1}{j}a_j, \qquad 0\le k\le m-2.
\end{multline*}
\end{Cor}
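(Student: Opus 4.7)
The plan is to deduce the corollary as a direct consequence of two independent evaluations of the same integral
\begin{equation*}
\mathcal{C}_k^m = \int_0^1 x^k \left\{\frac{1}{x}\right\}^m\, dx.
\end{equation*}
On one hand, the formula \eqref{eq:Furdui-zetas} of Furdui expresses $\mathcal{C}_k^m$ as the infinite series $\frac{m!}{(k+1)!}\sum_{j\ge 1} \frac{(k+j)!}{(m+j)!}(\zeta(k+j+1)-1)$. On the other hand, Theorem~\ref{moment-pot} gives a closed form in three regimes, namely $k\ge m$, $k=m-1$, and $0\le k\le m-2$. Setting these two evaluations equal in each regime immediately yields the three identities of the corollary.

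For the outer regimes $k\ge m$ and $0\le k\le m-2$, the corollary is a transcription: the left-hand side of each identity is literally the series in \eqref{eq:Furdui-zetas}, and the right-hand side is the corresponding closed form supplied by Theorem~\ref{moment-pot}. For the middle identity, I would specialize \eqref{eq:Furdui-zetas} at $k=m-1$. Then $m!/(k+1)!=1$ and the factorial ratio simplifies as $(k+j)!/(m+j)!=(m-1+j)!/(m+j)!=1/(m+j)$, so the series collapses to $\sum_{j\ge 1}(\zeta(m+j)-1)/(m+j)$. Matching this against the case $k=m-1$ of Theorem~\ref{moment-pot} produces the stated identity.

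There is no substantial obstacle: the corollary is a pure comparison of two expressions for $\mathcal{C}_k^m$, both of which are already at our disposal. The only care needed is bookkeeping, in particular verifying that the index ranges in Theorem~\ref{moment-pot} align exactly with the three cases of the corollary, and that the factorial cancellation in the middle case is executed correctly before the comparison is made.
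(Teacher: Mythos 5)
Your proposal is correct and coincides with the paper's own argument: the corollary is obtained precisely by equating Furdui's series representation \eqref{eq:Furdui-zetas} of $\int_0^1 x^k\{1/x\}^m\,dx$ with the three closed forms of Theorem \ref{moment-pot}, including the same simplification $\frac{m!}{(k+1)!}\cdot\frac{(k+j)!}{(m+j)!}=\frac{1}{m+j}$ at $k=m-1$. Nothing further is needed.
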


By using the values
\[
a_0=\log\sqrt{2\pi}\qquad \text{ and }\qquad a_1=-\frac{1}{4}+\frac{\zeta'(2)}{2\pi^2}+\frac{1}{3}\log\sqrt{2\pi}-\frac{\gamma}{12},
\]
we can see some particular cases. Taking $k=m-2$ and $k=m-3$, we have, respectively,
\[
\sum_{j=1}^{\infty}\frac{\zeta(m+j-1)-1}{(m+j)(m+j-1)}=-\frac{1}{m}+\log\sqrt{2\pi}-\frac{\gamma}{2}-\sum_{n=2}^{m-1}\frac{\zeta(n)}{n(n+1)},\qquad m\ge 2,
\]
and
\begin{multline*}
\sum_{j=1}^{\infty}\frac{\zeta(m+j-2)-1}{(m+j)(m+j-1)(m+j-2)}\\=-\frac{1}{2m(m-1)}+\frac{\zeta'(2)}{2\pi^2}+\frac{1}{3}\log\sqrt{2\pi}
-\frac{\gamma}{4}-\sum_{n=2}^{m-2}\frac{\zeta(n)}{n(n+1)(n+2)}, \qquad m\ge 3.
\end{multline*}

\begin{Remark}
From the well-known Hermite identity
\[
\sum_{k=0}^{n-1}\left[x+\frac{k}{n}\right]=[nx], \qquad x\in \mathbb{R},
\]
we obtain that
\[
\sum_{k=0}^{n-1}\left\{x+\frac{k}{n}\right\}=\{nx\}+\frac{n-1}{2}, \qquad x\in \mathbb{R}.
\]
Then, applying Theorem \ref{moment-pot}, we can deduce that
\[
\int_{0}^{n}x^k\sum_{k=0}^{n-1}\left\{\frac{1}{x}+\frac{k}{n}\right\}\, dx=n^{k+1}\left(\frac{1}{k}+\frac{n-1-2\zeta(k+1)}{2(k+1)}\right), \qquad n,k\ge 1,
\]
and
\[
\int_{0}^{n}\sum_{k=0}^{n-1}\left\{\frac{1}{x}+\frac{k}{n}\right\}\, dx=n\left(\frac{n+1}{2}-\gamma\right), \qquad n\ge 1.
\]
\end{Remark}

\begin{Remark}
In \cite[Theorem 3.1]{Furdui-Analysis}, it is proved that
\[
\int_{0}^{1}\int_{0}^{1}\left\{\frac{x}{y}\right\}^m\left\{\frac{y}{x}\right\}^k\, dx\, dy=\frac{\mathcal{C}_k^m+\mathcal{C}_m^k}{2}.
\]
In such paper, the double integral is written as an infinite sum of values of the Riemann zeta function by using \eqref{eq:Furdui-zetas}. With Theorem \ref{moment-pot} it is possible to obtain an appropriate closed form for the integral.
\end{Remark}

\section{Fractional moments for the functions $x^m(1-x)^m$}
To finish with our examples, we study the fractional moments for the functions $f_m(x)=x^m(1-x)^m$. To do this, we start obtaining an expression for $f_m(x)$ in terms of Bernoulli polynomials because, as we said in the Introduction, the integrals of $\log\Gamma(x+1)$ with polynomials behave better with the Bernoulli polynomials $B_k(x)$ than with the usual powers $x^k$. In fact, in \cite[Example 6.4]{Espinosa-Moll-1} to evaluate the integrals
\[
\int_{0}^{1}x^k\log \Gamma(x)\, dx
\]
the authors write them in terms of Bernoulli polynomials by using \eqref{eq:pot-Ber}, as we did in Lemma \ref{lem:pot-log}. To obtain the expansion of $f_m(x)$ in terms of Bernoulli polynomials we need some sums of combinatorial numbers that are contained in the following lemma.

\begin{Lem}
\label{lem:fm-sums}
The identities
\begin{equation}
\label{eq:j0}
\sum_{k=0}^{m}\frac{(-1)^k}{m+k+1}\binom{m}{k}=\frac{1}{(2m+1)\binom{2m}{m}},
\end{equation}
\begin{equation}
\label{eq:j1-m}
\sum_{k=0}^{m}\frac{(-1)^k}{m+k+1}\binom{m}{k}\binom{m+k+1}{j}=0, \qquad j=1,\dots,m,
\end{equation}
and
\begin{equation}
\label{eq:jm+1-2m}
\sum_{k=j-m}^{m}\frac{(-1)^k}{m+k+1}\binom{m}{k}\binom{m+k+1}{j}=\frac{(-1)^m(1+(-1)^j)}{j}\binom{m}{j-m-1},
\end{equation}
for $j=m+1,\dots,2m$, hold
\end{Lem}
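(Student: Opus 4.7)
\textbf{Proof proposal for Lemma \ref{lem:fm-sums}.}

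The plan is to handle all three identities via a single generating-function identification. The starting observation is that $\frac{1}{m+k+1}(1+x)^{m+k+1} = \int_{0}^{1+x} t^{m+k}\,dt$, so weighting by $(-1)^{k}\binom{m}{k}$ and summing over $k$ collapses the inner sum by the binomial theorem:
\[
\sum_{k=0}^{m}\frac{(-1)^{k}}{m+k+1}\binom{m}{k}(1+x)^{m+k+1} = \int_{0}^{1+x} t^{m}(1-t)^{m}\,dt =: F(1+x).
\]
Since $\binom{m+k+1}{j}$ is the coefficient of $x^{j}$ in $(1+x)^{m+k+1}$, extracting $[x^{j}]$ from both sides identifies the full sum
\[
S_{j}:=\sum_{k=0}^{m}\frac{(-1)^{k}}{m+k+1}\binom{m}{k}\binom{m+k+1}{j}
\]
with the $j$-th Taylor coefficient $F^{(j)}(1)/j!$ of $F$ about $y=1$.

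The three identities now read off from this single picture. For \eqref{eq:j0}, $S_{0}=F(1)=\int_{0}^{1}t^{m}(1-t)^{m}\,dt=\frac{(m!)^{2}}{(2m+1)!}=\frac{1}{(2m+1)\binom{2m}{m}}$. For \eqref{eq:j1-m}, note that $F'(y)=y^{m}(1-y)^{m}$ has a zero of order $m$ at $y=1$, so $F^{(j)}(1)=0$ for $1\le j\le m$; in this range $m+k+1\ge m+1\ge j$ for every $k\ge 0$, so none of the binomials in $S_{j}$ vanish and the sum in the lemma already coincides with $S_{j}$.

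For \eqref{eq:jm+1-2m} I would compute $S_{j}$ when $m+1\le j\le 2m$ by expanding $y^{m}(1-y)^{m}=(-1)^{m}\sum_{\ell=0}^{m}\binom{m}{\ell}(y-1)^{m+\ell}$; differentiating $j-1$ times and evaluating at $y=1$ retains only the contribution with $m+\ell=j-1$, yielding $F^{(j)}(1)=(-1)^{m}(j-1)!\binom{m}{j-m-1}$ and hence $S_{j}=\frac{(-1)^{m}}{j}\binom{m}{j-m-1}$. The sum stated in the lemma differs from $S_{j}$ only by the omitted range $0\le k\le j-m-1$, where $\binom{m+k+1}{j}$ vanishes except at $k=j-m-1$ (there it equals $\binom{j}{j}=1$). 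Subtracting this single surviving term $\frac{(-1)^{j-m-1}}{j}\binom{m}{j-m-1}$ from $S_{j}$ and using $(-1)^{j-m-1}=-(-1)^{j+m}$ gives $\frac{(-1)^{m}(1+(-1)^{j})}{j}\binom{m}{j-m-1}$, as required.

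The main obstacle will be the boundary bookkeeping in \eqref{eq:jm+1-2m}: one must identify the unique nonzero omitted term at $k=j-m-1$ and handle signs carefully so that $(-1)^{m}-(-1)^{j-m-1}$ collapses to the parity factor $(-1)^{m}(1+(-1)^{j})$. This factor encodes the vanishing of the coefficient for odd $j$, a reflection of the symmetry $f_{m}(x)=f_{m}(1-x)$ of the underlying function.
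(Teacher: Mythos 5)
Your proposal is correct, and it takes a genuinely different route from the paper for the second and third identities. The paper treats the three sums by three separate devices: \eqref{eq:j0} by writing $1/(m+k+1)=\int_0^1x^{m+k}\,dx$ and recognizing a Beta integral (which is exactly your $S_0=F(1)$); \eqref{eq:j1-m} by expanding $\binom{m+k+1}{j}/(m+k+1)$ in falling factorials $(k)_\ell$ and killing each term with the alternating sum $\sum_k(-1)^k\binom{p}{k}=0$; and \eqref{eq:jm+1-2m} by reducing to $\sum_k(-1)^k\binom{m}{k}\binom{m+k}{j-1}$ and invoking identity (5.24) of Graham--Knuth--Patashnik. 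You instead package all three sums as the Taylor coefficients $S_j=F^{(j)}(1)/j!$ of the single antiderivative $F(y)=\int_0^y t^m(1-t)^m\,dt$, so that \eqref{eq:j1-m} becomes the statement that $F'$ vanishes to order $m$ at $y=1$ and \eqref{eq:jm+1-2m} follows from the expansion $y^m(1-y)^m=(-1)^m\sum_{\ell=0}^m\binom{m}{\ell}(y-1)^{m+\ell}$; your bookkeeping of the single nonvanishing omitted term at $k=j-m-1$, and the resulting collapse of $(-1)^m-(-1)^{j-m-1}$ to $(-1)^m(1+(-1)^j)$, is accurate. Your argument is more unified and self-contained (no external combinatorial identity is needed), and it makes the parity factor transparent via the symmetry of $t^m(1-t)^m$ about $t=1/2$; the paper's version is more in the style of standard hypergeometric/binomial manipulation and delegates the hardest sum to a reference. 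One cosmetic remark: for \eqref{eq:j1-m} the sum in the lemma is literally $S_j$ by definition, so the aside about none of the binomials vanishing is not needed.
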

\begin{proof}
To prove \eqref{eq:j0} we apply integration. Indeed, it is easy to check that
\begin{align*}
\sum_{k=0}^{m}\frac{(-1)^k}{m+k+1}\binom{m}{k}&=\sum_{k=0}^{m}(-1)^k\binom{m}{k}\int_{0}^{1}x^{m+k}\, dx
\\&=\int_{0}^{1}x^m(1-x)^m\, dx\\&=\frac{(\Gamma(m+1))^2}{\Gamma(2m+2)}=\frac{1}{(2m+1)\binom{2m}{m}}.
\end{align*}

In the proof of \eqref{eq:j1-m} we use the falling factorial, and it is given by $(a)_n=a(a-1)\cdots (a-n+1)$, for $n>1$, and $(a)_0=1$. It is clear that $\binom{m+k+1}{j}$ is a polynomial in the variable $k$ of degree $j$ and, for $j\ge 1$,
\begin{align*}
\binom{m+k+1}{j}\frac{1}{m+k+1}&=\frac{(m+k)(m+k-1)\cdots (m+k+2-j)}{j!}\\&=\sum_{\ell=0}^{j-1}a_{j,m,\ell}(k)_\ell
\end{align*}
for some coefficients $a_{j,m,\ell}$. From the identity
\[
\binom{m}{k}(k)_\ell=\binom{m-\ell}{k-\ell}(m)_\ell,
\]
we have
\begin{multline*}
\sum_{k=0}^{m}\frac{(-1)^k}{m+k+1}\binom{m}{k}\binom{m+k+1}{j}=\sum_{\ell=0}^{j-1}a_{j,m,\ell}
\sum_{k=\ell}^{m}(-1)^k\binom{m}{k}(k)_\ell\\
\begin{aligned}
&=\sum_{\ell=0}^{j-1}a_{j,m,\ell}\sum_{k=\ell}^{m}(-1)^k\binom{m-\ell}{k-\ell}(m)_\ell\\&=
\sum_{\ell=0}^{j-1}(-1)^\ell a_{j,m,\ell}(m)_\ell\sum_{k=0}^{m-\ell}(-1)^k\binom{m-\ell}{k}=0,
\end{aligned}
\end{multline*}
where in the last step we have applied the identity $\sum_{k=0}^{p}(-1)^k\binom{p}{k}=0$ with $p=m-j+1,\dots, m$.

The identity \eqref{eq:jm+1-2m} is equivalent to
\[
\sum_{k=j-m}^{m}(-1)^k\binom{m}{k}\binom{m+k}{j-1}=(-1)^m(1+(-1)^j)\binom{m}{j-m-1}.
\]
Obviously,
\begin{multline*}
\sum_{k=j-m}^{m}(-1)^k\binom{m}{k}\binom{m+k}{j-1}\\=\sum_{k=j-m-1}^{m}(-1)^k\binom{m}{k}\binom{m+k}{j-1}-(-1)^{j-m-1}\binom{m}{j-m-1}.
\end{multline*}
Now, applying \cite[(5.24)]{Concrete}, we have
\[
\sum_{k=j-m-1}^{m}(-1)^k\binom{m}{k}\binom{m+k}{j-1}=(-1)^m\binom{m}{j-m-1}
\]
and
\[
\sum_{k=j-m}^{m}(-1)^k\binom{m}{k}\binom{m+k}{j-1}=(-1)^m(1+(-1)^j)\binom{m}{j-m-1}
\]
finishing the proof.
\end{proof}

\begin{Lem}
\label{Lem:expan-Berno}
For each $m\ge 1$, it is verified
\[
f_m(x)=\frac{1}{(2m+1)\binom{2m}{m}}+(-1)^m\sum_{j=m+1}^{2m}\frac{1+(-1)^j}{j}\binom{m}{j-m-1}B_j(x).
\]
\end{Lem}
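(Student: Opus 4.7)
The strategy is to expand $f_m(x)=x^m(1-x)^m$, which is a polynomial of degree $2m$, in the basis of Bernoulli polynomials $\{B_0(x),B_1(x),\dots,B_{2m}(x)\}$, and then read off the coefficients using the three identities in Lemma \ref{lem:fm-sums}.

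First I would write $f_m(x)=\sum_{k=0}^{m}(-1)^k\binom{m}{k}x^{m+k}$ by the binomial theorem, then insert the expansion \eqref{eq:pot-Ber} of each monomial $x^{m+k}$ in terms of Bernoulli polynomials. This gives
\[
f_m(x)=\sum_{k=0}^{m}\frac{(-1)^k}{m+k+1}\binom{m}{k}\sum_{j=0}^{m+k}\binom{m+k+1}{j}B_j(x).
\]
Swapping the order of summation (the constraint $j\le m+k$ becomes $k\ge\max(0,j-m)$, while $j$ ranges from $0$ to $2m$), I obtain
\[
f_m(x)=\sum_{j=0}^{2m}c_j^{(m)}B_j(x),\qquad c_j^{(m)}=\sum_{k=\max(0,j-m)}^{m}\frac{(-1)^k}{m+k+1}\binom{m}{k}\binom{m+k+1}{j}.
\]

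Now I would evaluate the coefficients in three cases using Lemma \ref{lem:fm-sums}. For $j=0$, identity \eqref{eq:j0} gives $c_0^{(m)}=\frac{1}{(2m+1)\binom{2m}{m}}$, producing the constant term in the statement. For $1\le j\le m$, identity \eqref{eq:j1-m} yields $c_j^{(m)}=0$, so these Bernoulli polynomials do not appear in the expansion. For $m+1\le j\le 2m$, the lower limit of the sum is $j-m$, and identity \eqref{eq:jm+1-2m} gives
\[
c_j^{(m)}=\frac{(-1)^m(1+(-1)^j)}{j}\binom{m}{j-m-1},
\]
which matches the coefficients in the claimed formula exactly. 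Assembling the three cases completes the proof.

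The main obstacle is essentially bookkeeping: being careful with the range of summation after exchanging the order (particularly the split at $j=m$) and matching the outputs of the three combinatorial identities to the correct blocks. No further work is needed beyond Lemma \ref{lem:fm-sums} and the expansion \eqref{eq:pot-Ber}, both already at our disposal.
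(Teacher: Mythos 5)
Your proposal is correct and follows essentially the same route as the paper: binomial expansion of $x^m(1-x)^m$, substitution of \eqref{eq:pot-Ber} for each monomial, exchange of the order of summation, and identification of the coefficients via \eqref{eq:j0}, \eqref{eq:j1-m}, and \eqref{eq:jm+1-2m}. The bookkeeping of the summation ranges (lower limit $k=\max(0,j-m)$ and the split at $j=m$) matches the paper's argument exactly.
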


\begin{proof}
To obtain the identity, we use Newton's binomial identity and \eqref{eq:pot-Ber}. Indeed,
\begin{align*}
f_m(x)&=x^m\sum_{k=0}^{m}(-1)^k\binom{m}{k}x^k=\sum_{k=0}^{m}(-1)^k\binom{m}{k}x^{m+k}\\&=\sum_{k=0}^{m}\frac{(-1)^k}{m+k+1}\binom{m}{k}\sum_{j=0}^{m+k}\binom{m+k+1}{j}B_j(x)
\\&=\sum_{j=0}^{m}B_j(x)\sum_{k=0}^{m}\frac{(-1)^k}{m+k+1}\binom{m}{k}\binom{m+k+1}{j}
\\&\kern25 pt+\sum_{j=m+1}^{2m}B_j(x)\sum_{k=j-m}^{m}\frac{(-1)^k}{m+k+1}\binom{m}{k}\binom{m+k+1}{j}
\\&=\sum_{k=0}^{m}\frac{(-1)^k}{m+k+1}\binom{m}{k}+\sum_{j=1}^{m}B_j(x)\sum_{k=0}^{m}\frac{(-1)^k}{m+k+1}\binom{m}{k}\binom{m+k+1}{j}
\\&\kern25 pt+\sum_{j=m+1}^{2m}B_j(x)\sum_{k=j-m}^{m}\frac{(-1)^k}{m+k+1}\binom{m}{k}\binom{m+k+1}{j}
\end{align*}
We finish applying the identities \eqref{eq:j0}, \eqref{eq:j1-m}, and \eqref{eq:jm+1-2m} in the previous lemma.
\end{proof}

\begin{Remark}
It is interesting to observe that, by using \eqref{eq:ber-der}, we have
\begin{align}
\label{eq:derivatives}
f_m^{(k)}(x)&=(-1)^m\sum_{j=\max\{k,m+1\}}^{2m}\frac{1+(-1)^j}{j}\binom{m}{j-m-1}\frac{j!}{(j-k)!}B_{j-k}(x),\notag
\\&=(-1)^m k!\sum_{j=\max\{k,m+1\}}^{2m}\frac{1+(-1)^j}{j}\binom{m}{j-m-1}\binom{j}{k}B_{j-k}(x),
\end{align}
for $1\le k\le 2m$.
\end{Remark}

\begin{Remark}
Let $\mathcal{P}_m$ be the shifted Legendre polynomial of degree $m$; i. e, $\mathcal{P}_{m}(x)=P_m(2x-1)$, where $P_m$ is the standard Legendre polynomial. By Rodrigues' formula
\[
\mathcal{P}_m(x)=\frac{(-1)^m}{m!}f_m^{(m)}(x)
\]
and applying \eqref{eq:derivatives} we deduce that, for $m\ge 1$,
\begin{align*}
\mathcal{P}_m(x)&=\frac{1}{m!}\sum_{j=m+1}^{2m}\frac{1+(-1)^j}{j}\binom{m}{j-m-1}\frac{j!}{(j-m)!}B_{j-m}(x)\\
&=\sum_{j=1}^{m}(1+(-1)^{m+j})\frac{(m+j-1)!}{j!\, (j-1)!\, (m-j+1)!}B_j(x).
\end{align*}
The previous identity for shifted Legendre polynomials matches with the given ones in \cite[Theorem 2.2 and Theorem 2.4]{Navas-Ruiz-Varona}.
\end{Remark}

To evaluate the fractional moments of the functions $f_m$ we need the combinatorial identities contained in the next lemma.
\begin{Lem}
\label{lem:im-sums}
For $m\ge 1$, the identities
\begin{equation}
\label{eq:im-sum-1}
\sum_{j=m}^{2m}\frac{\binom{m}{j-m}}{\binom{k}{j}}=\frac{m!(k-2m)!(k+1)}{(k+1-m)!}, \qquad k\ge 2m,
\end{equation}
and
\begin{equation}
\label{eq:im-sum-2}
\sum_{j=m}^{2m-1}\frac{\binom{m}{j-m}}{\binom{2m-1}{j}}=2m(H_{2m}-H_m)
\end{equation}
hold.
\end{Lem}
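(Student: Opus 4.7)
The plan is to prove both identities by the same method: rewrite the reciprocal binomial coefficients via the Beta function and then recognize a binomial expansion inside the resulting integral. Specifically, I would start from
\[
\frac{1}{\binom{n}{p}}=(n+1)\int_{0}^{1}x^{p}(1-x)^{n-p}\,dx,\qquad 0\le p\le n,
\]
which follows immediately from $B(p+1,n-p+1)=p!(n-p)!/(n+1)!$.

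For \eqref{eq:im-sum-1}, I would substitute $j=m+i$ with $i=0,\ldots,m$ and interchange sum and integral, so the left-hand side becomes
\[
(k+1)\int_{0}^{1}\sum_{i=0}^{m}\binom{m}{i}x^{m+i}(1-x)^{k-m-i}\,dx
=(k+1)\int_{0}^{1}x^{m}(1-x)^{k-2m}\sum_{i=0}^{m}\binom{m}{i}x^{i}(1-x)^{m-i}\,dx.
\]
The inner sum is $(x+(1-x))^{m}=1$ by the binomial theorem, so the integral collapses to $(k+1)B(m+1,k-2m+1)=(k+1)m!(k-2m)!/(k-m+1)!$, which is exactly the right-hand side of \eqref{eq:im-sum-1}. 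The hypothesis $k\ge 2m$ is used precisely to guarantee the nonnegativity of the exponent $k-2m$.

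For \eqref{eq:im-sum-2}, the same substitution with $k=2m-1$ yields the integrand $\sum_{i=0}^{m-1}\binom{m}{i}x^{m+i}(1-x)^{m-1-i}$. Factoring $x^{m}/(1-x)$ and using that $\sum_{i=0}^{m-1}\binom{m}{i}x^{i}(1-x)^{m-i}=(x+(1-x))^{m}-x^{m}=1-x^{m}$, the integrand becomes
\[
\frac{x^{m}(1-x^{m})}{1-x}=\frac{x^{m}-x^{2m}}{1-x}=\frac{(1-x^{2m})-(1-x^{m})}{1-x}.
\]
Then, invoking the standard identity $\int_{0}^{1}\frac{1-x^{n}}{1-x}\,dx=H_{n}$, the right-hand side comes out to $2m(H_{2m}-H_{m})$, where the factor $2m$ is the $(k+1)=2m$ from the Beta-function representation.

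I do not expect a substantial obstacle: the whole argument rests on recognizing the binomial expansion inside the integrals, and the only subtlety is treating the two cases separately because the range of summation in \eqref{eq:im-sum-2} stops one term short of the full binomial expansion, which is precisely what produces the harmonic-number discrepancy $H_{2m}-H_{m}$ instead of a single Beta evaluation.
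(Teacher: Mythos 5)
Your proof is correct, but it follows a genuinely different route from the paper. The paper proves \eqref{eq:im-sum-1} by rewriting it as the convolution $\sum_{n=0}^{m}\binom{m+n}{m}\binom{k-m-n}{k-2m}=\binom{k+1}{m}$ and citing a standard identity from Graham--Knuth--Patashnik, and it proves \eqref{eq:im-sum-2} by verifying that both sides satisfy the recurrence $ma_{m+1}=(m+1)a_m+\tfrac{m}{2m+1}$ and agree at $m=1$. You instead use the Beta-integral representation $1/\binom{n}{p}=(n+1)\int_0^1 x^p(1-x)^{n-p}\,dx$ uniformly for both identities: the binomial theorem collapses the sum inside the integral to $1$ in the first case (giving a single Beta evaluation), and to $1-x^m$ in the second, whence $\int_0^1 x^m(1+x+\cdots+x^{m-1})\,dx=H_{2m}-H_m$. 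All steps check out, including the nonnegativity of the exponent $k-2m$ and the fact that $x^m(1-x^m)/(1-x)$ is actually a polynomial, so no convergence issue arises. Your argument is self-contained and treats both cases by one mechanism, and it makes transparent \emph{why} the harmonic numbers appear in \eqref{eq:im-sum-2} (the sum stops one term short of the full binomial expansion); the paper's proof of \eqref{eq:im-sum-1} is purely finite and combinatorial, while its proof of \eqref{eq:im-sum-2} requires the reader to supply the (nontrivial) verification that the left-hand sum satisfies the stated recurrence.
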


\begin{proof}
The first identity is equivalent to
\[
\sum_{n=0}^{m}\binom{m+n}{m}\binom{k-m-n}{k-2m}=\binom{k+1}{m}
\]
and it follows from \cite[(5.26)]{Concrete}. To prove the second one we check that both sides satisfy the recurrence relation
\[
ma_{m+1}=(m+1)a_m+\frac{m}{2m+1}
\]
and they coincide for $m=1$.
\end{proof}

To complete the evaluation of the fractional moments for the functions $f_m$, we will need some elementary facts about them. More exactly, $f_m^{(j)}(x)=0$, for $j>2m$, $f_m^{(j)}(0)=f_m^{(j)}(1)=0$, for $0\le j <m$, and, by using Leibniz rule for the derivative of a product,
\begin{equation}
\label{eq:derivative-0-1}
(-1)^jf^{(j)}_m(0)=f_m^{(j)}(1)=(-1)^mj!\binom{m}{j-m}, \qquad m\le j \le 2m,
\end{equation}

Then, denoting
\[
  \mathcal{I}_k^m=\int_{0}^{1}x^k f_m\left(\left\{\frac{1}{x}\right\}\right)\, dx,
\]
\[
\delta(x,y)=\begin{cases}1, & x=y,\\ 0, & x\not = y,\end{cases}
\qquad \text{ and } \qquad
p_{m,k}=\sum_{j=m}^{k}\frac{\binom{m}{j-m}}{\binom{k}{j}},
\]
we have the following result.
\begin{Theo}
  For $m\ge 1$, it is verified that
  \[
  \mathcal{I}_k^m=(-1)^m\Bigg(\frac{1}{(k+1-m)\binom{k-m}{m}}-\frac{2}{k+1}\sum_{j=[m/2]}^{m-1}
  \frac{\binom{m}{2j+1-m}}{\binom{k}{2j+1}}\zeta(k-2j)\Bigg),
  \]
for $k\ge 2m$,
\[
\mathcal{I}_{2m-1}^m=(-1)^m\Bigg(H_{2m}-H_m-\gamma-\frac{1}{m}\sum_{j=[m/2]}^{m-2}
\frac{\binom{m}{2j+1-m}}{\binom{2m-1}{2j+1}}\zeta(2m-1-2j)\Bigg),
\]
\begin{multline*}
\mathcal{I}_k^m=
\frac{(-1)^m}{k+1}\Bigg(p_{m,k}-2\sum_{j=[m/2]}^{[k/2]-1}\frac{\binom{m}{2j+1-m}}{\binom{k}{2j+1}}\zeta(k-2j)\\
-2\gamma\binom{m}{2m-k}\delta((k+1)/2,[(k+1)/2])\Bigg)\\
+(-1)^m(k+2)\sum_{[m/2]+1}^{m}\binom{m}{2j-m-1}\binom{2j}{k+2}\frac{b_{2j-k-2}}{j},
\end{multline*}
for $m\le k \le 2m-2$, and
\[
\mathcal{I}_k^m=(-1)^m(k+2)\sum_{j=[m/2]+1}^{m}\binom{m}{2j-m-1}\binom{2j}{k+2}\frac{b_{2j-k-2}}{j}, 
\]
for $0\le k \le m-1$.
\end{Theo}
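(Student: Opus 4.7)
The plan is to apply Lemma~\ref{Lem:main} to $f=f_m$ and process the two resulting pieces (a boundary sum and a $\log\Gamma$-integral) separately, then assemble the four formulas by a case analysis on $k$. From the computations preceding the statement, $f_m^{(j)}(0)=f_m^{(j)}(1)=0$ for $0\le j<m$ and $f_m^{(j)}\equiv 0$ for $j>2m$, so only indices $m\le j\le\min\{k,2m\}$ survive in the boundary sum. Using \eqref{eq:derivative-0-1} I would rewrite the bracket in Lemma~\ref{Lem:main} as
\[
f_m^{(j)}(0)\alpha_{k-j}-f_m^{(j)}(1)(\alpha_{k-j}-1)=(-1)^m j!\binom{m}{j-m}\bigl(((-1)^j-1)\alpha_{k-j}+1\bigr),
\]
so even $j$ contributes only the constant $1$ while odd $j$ contributes $1-2\alpha_{k-j}$. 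Multiplying by $(k-j)!/(k+1)!=1/((k+1)\binom{k}{j})$ and collecting, the boundary piece reads
\[
\frac{(-1)^m}{k+1}\sum_{j=m}^{\min\{k,2m\}}\frac{\binom{m}{j-m}}{\binom{k}{j}}-\frac{2(-1)^m}{k+1}\sum_{\substack{j=m\\ j\text{ odd}}}^{\min\{k,2m\}}\frac{\binom{m}{j-m}}{\binom{k}{j}}\alpha_{k-j},
\]
and the change of variables $j=2\ell+1$ in the odd subsum gives exactly the range $\ell\ge[m/2]$ appearing in the statement.

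For the integral piece I would plug the Bernoulli expansion \eqref{eq:derivatives} of $f_m^{(k+2)}$ into $\int_0^1 f_m^{(k+2)}(x)\log\Gamma(x+1)\,dx$ and apply Lemma~\ref{Lem:Berno-Log} termwise. Because of the factor $1+(-1)^j$ only even indices $j=2i$ survive, and the effective range collapses to $[m/2]+1\le i\le m$ (the lower bound combines $2i\ge\max\{k+2,m+1\}$ with the vanishing $\binom{2i}{k+2}=0$ for $2i<k+2$; the upper bound is $2i\le 2m$). After dividing by $(k+1)!$ and simplifying $(k+2)!/(k+1)!=k+2$, this piece equals
\[
(-1)^m(k+2)\sum_{i=[m/2]+1}^{m}\frac{1}{i}\binom{m}{2i-m-1}\binom{2i}{k+2}b_{2i-k-2},
\]
which is precisely the $b$-sum in the statement. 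When $k+2>2m$, i.e.\ $k\ge 2m-1$, the integral vanishes identically.

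With both pieces in hand, the case analysis is routine. For $k\ge 2m$ the integral is absent; \eqref{eq:im-sum-1} collapses the first sum to $1/((k+1-m)\binom{k-m}{m})$, and since $k-j\ge 1$ throughout the odd subsum each $\alpha_{k-j}$ is $\zeta(k-j+1)$, yielding the first formula. For $k=2m-1$ the integral still vanishes; \eqref{eq:im-sum-2} evaluates the first sum to $(-1)^m(H_{2m}-H_m)$, while the index $j=2m-1$ of the odd subsum contributes $\alpha_0=\gamma$ separately with coefficient $-(-1)^m$, leaving the remaining $\zeta$-terms indexed by $[m/2]\le\ell\le m-2$. For $m\le k\le 2m-2$ all three contributions are present; the Kronecker $\delta$ factor records that $\alpha_0=\gamma$ appears in the odd subsum exactly when $k$ is odd (at the single index $j=k$), where $\binom{m}{k-m}=\binom{m}{2m-k}$. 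Finally, for $0\le k\le m-1$ the boundary sum is empty since no $j$ satisfies $m\le j\le k$, and only the integral contribution remains.

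The main obstacle is the index bookkeeping: tracking the parity constraints, the upper endpoints $\min\{k,2m\}$ versus $2m$, and in particular isolating the $\alpha_0=\gamma$ contributions (the only boundary terms not of $\zeta$-type) without disturbing the remaining $\zeta$-sum. Once that matching is set up correctly, Lemma~\ref{lem:im-sums} does the remaining algebraic collapsing and the four formulas follow immediately.
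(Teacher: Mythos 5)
Your proposal is correct and follows exactly the route the paper intends: Lemma~\ref{Lem:main} applied to $f_m$, with the boundary sum handled via \eqref{eq:derivative-0-1} and Lemma~\ref{lem:im-sums}, and the integral term handled via \eqref{eq:derivatives} and Lemma~\ref{Lem:Berno-Log}. The index bookkeeping (parity split, the isolated $\alpha_0=\gamma$ terms, and the ranges $[m/2]\le\ell\le[k/2]-1$ and $[m/2]+1\le i\le m$) all checks out.
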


\begin{proof}
The result is a consequence of Lemma \ref{Lem:main}, \eqref{eq:derivatives}, Lemma \ref{Lem:Berno-Log}, and Lemma \ref{lem:im-sums}. In particular, we have to use \eqref{eq:im-sum-1} for the case $k\ge 2m$ and \eqref{eq:im-sum-2} for $k=2m-1$. Moreover, the values of the derivatives of $f_m$ in $x=0$ and $x=1$ given in \eqref{eq:derivative-0-1} are also used.
\end{proof}

\begin{Remark}
Unfortunately, we could not find a closed form for the sum
\[
p_{m,k}=\sum_{j=m}^{k}\frac{\binom{m}{j-m}}{\binom{k}{j}}
\]
appearing in the case $m\le k \le 2m-2$. This remains as an open question.

\end{Remark}


\end{document}